\renewcommand{\ge}{\geqslant}
\renewcommand{\le}{\leqslant}
\newcommand{\C}{{\mathbb C}}
\newcommand{\R}{{\mathbb R}}
\newcommand{\Z}{{\mathbb Z}}
\newcommand{\primes}{{\mathcal P}}
\theoremstyle{plain}
\theoremstyle{definition}
\theoremstyle{claim}
\newtheorem{claim}{Claim}
\newtheorem{remark}{Remark}
\begin{document}
\bibliographystyle{plain}
\title{Some instructive mathematical errors%
}
\author
{Richard P.\ Brent\footnote{Mathematical Sciences Institute, 
Australian National University, Canberra, Australia}
}
\date{\today}	
\maketitle

\begin{abstract}

We describe various errors in the mathematical literature, and consider how
some of them might have been avoided, or at least detected at an earlier stage,
using tools such as Maple or Sage.
Our examples are drawn from three broad categories of errors. 
First, we consider some significant errors made by highly-regarded
mathematicians. In some cases these errors were not detected until many
years after their publication.
Second, we consider in some detail an error 
that was recently detected by the author.
This error in a refereed journal led to further errors by
at least one author who relied on the (incorrect) result.
Finally, we mention some instructive
errors that have been detected in the author's own
published papers.

\end{abstract}

\section{Introduction}				\label{sec:Intro}

\begin{quotation}
\emph{Those that fail to learn from history are doomed to repeat it.}\\[5pt]
\hspace*{\fill}Winston Churchill~\cite{Churchill}.
\end{quotation}

Since mathematics is a human endeavour, errors can and do occur.
It is worth studying past errors and learning from them to reduce
the likelihood of making similar errors in the future.

We are concerned with nontrivial errors made by working mathematicians, not
with the class of errors that school-children often make in their homework.

\vspace*{\fill}
\pagebreak[3]

The errors that we consider can be grouped into three broad categories.

\begin{enumerate}
\item Well-known errors made by prominent mathematicians
	(see \S\ref{sec:well-known}).
						\label{well-known-errors}
\item Errors discovered by the author in other mathematicians' work 
	(\S\ref{sec:VMA}).
						\label{recent-errors}
\item Some errors in, or relevant to, the author's own work
	(\S\ref{sec:Brent}).			\label{rpb-errors}
\end{enumerate}

It is worth mentioning the errors in category \ref{well-known-errors}
in order to show that
even the best mathematicians are fallible.
Also, correcting an error may lead to interesting mathematics
(see for example~\S\ref{subsec:Poincare} and \S\ref{subsec:Wiles}).
Our list is by no means exhaustive.

Errors in category $\ref{recent-errors}$ were discovered recently by the
author. These errors are discussed in some detail in order to show how
they were detected, and how they might have been avoided.

Errors in category $\ref{rpb-errors}$ are described as a penance, and
because the reader may find them instructive.

Our definition of ``error'' is rather broad. 
An error in a proof might be an unjustified assumption, or
a gap that needs to be filled.
As well as errors in published
or submitted papers, we include some incorrect results that were announced
in other ways, some implicit errors 
(see \S\ref{subsec:rpb269}),
and some claims that were expressed as conjectures rather than theorems
(see \S\ref{subsec:Stieltjes}, \S\ref{subsec:rpb055}).

\section{Some well-known errors}		       \label{sec:well-known}

Many errors have been made by highly-regarded
mathematicians. They have generally
been well-documented, 
so we merely give an overview, with references to further information.
Our list is (approximately) in chronological order.
A considerably longer list is available online~\cite{Wiki-incomplete-proofs}.

\subsection{The four-colour theorem}		\label{subsec:four-colour}

Many fallacious proofs of the four-colour theorem have been given.
Notable are two, by Kempe~\cite{Kempe} (1879) and Tait~\cite{Tait} (1880),
that both stood for
$11$ years before the errors were noticed~\cite{Wiki-four-colour}.
The first correct proof, by Appel and Haken~\cite{Appel-Haken}
in 1976, used some
of Kempe's ideas. The Appel-Haken proof was at first controversial, because
it depended on the computer-aided verification of $1834$ ``reducible
configurations'' which could not feasibly be verified by hand.
In fact, some relatively minor errors were discovered later, and corrected
in the 1989 book by Appel and Haken~\cite{Appel-Haken-book}.
There is still no known ``simple'' proof that does not involve checking
a large number of cases.

\subsection{Mertens and Stieltjes} 		\label{subsec:Stieltjes}

In 1897, Franz Mertens~\cite{Mertens} conjectured, on the basis
of numerical evidence, that 
$|M(x)| \le x^{1/2}$ for all $x > 1$,
where $M(x) := \sum_{n\le x}\mu(n),$
and $\mu(n)$ is the M\"obius function.
The same inequality was 
also conjectured (in 1885) by Thomas Stieltjes, 
in a letter to Charles Hermite~\cite{Stieltjes,Wiki-Mertens}.

In the same letter to Hermite, Stieltjes claimed to have a
proof of the weaker result
that $M(x) = O(x^{1/2})$. However, he never published 
a proof, and none was found in his papers after his death. 

It is well-known~\cite[\S14.28]{Titchmarsh}
that $M(x) = O(x^{1/2 + \varepsilon})$ for 
all $\varepsilon > 0$ if and only if the Riemann Hypothesis (RH)
is true. Thus, the claimed result of Stieltjes would imply RH.
In fact, it would imply even more.

In 1985, Andrew Odlyzko and Herman te Riele~\cite{Odlyzko-teRiele}
disproved the conjecture of Mertens.
The disproof was numerical, and did not disprove the claim
of Stieltjes. However, for reasons given 
in~\cite[\S\S1--2]{Odlyzko-teRiele}, most experts
believe that the claim $M(x)=O(x^{1/2})$
of Stieltjes is false.

\subsection{Poincar\'e's prize essay}		\label{subsec:Poincare}

In 1888, Henri Poincar\'e submitted a paper, entitled
\emph{The Three-Body Problem and the Equations of Dynamics},
to a competition sponsored by King Oscar~II of Sweden and Norway, and
the journal \emph{Acta Mathematica} (edited by G\"osta Mittag-Leffler). 
The prize committee 
(Hermite, Mittag-Leffler, and Weierstrass) awarded Poincar\'e the prize. 
The committee stated:
\begin{quotation}
\emph{It is the deep and original work of a 
mathematical genius whose position is
among the greatest mathematicians of the century.  The most important and
difficult questions, like the stability of the world system, are treated
using methods which open a new era in celestial mechanics.}
\end{quotation}

While Poincar\'e's manuscript was being prepared for printing in
\emph{Acta Mathematica}, sub-editor Edvard Phragm\'en communicated with
Poincar\'e about parts of the manuscript that he found difficult to
understand. 

\begin{quotation}
\emph{If the author were not what he is, I would not for a\\
moment hesitate to say that he has made a great mistake here.}\\[5pt]
\hspace*{\fill}
Phragm\'en, December 1888, see~\cite{Phragmen-StAndrews}.
\end{quotation}
Eventually, in December 1889, Poincar\'e admitted that he had made an error
with a critical consequence~-- his claimed proof of the stability of the
solar system was invalid!  Unfortunately, the paper~\cite{Poincare-1889b}
had already been printed,
and Mittag-Leffler had started to distribute it. With some difficulty,
Mittag-Leffler managed to recall almost all of the copies.
For further details and references, see~\cite{Gray-Poincare}.

Poincar\'e prepared a corrected version, about twice as long as the original
prize entry, and it was eventually published~\cite{Poincare-1890c}. 
Poincar\'e had to pay the extra costs involved, which exceeded the prize
money that he had won.

The story had a happy ending as, in realising his error and making
his corrections, Poincar\'e discovered the phenomenon of 
chaos~\cite{Barrow-Green,Diacu,Gray,Mawhin,Poincare-StAndrews}.
Some of the credit for this great discovery must go to Phragm\'en.

\subsection{Rademacher and the Riemann Hypothesis}
						\label{subsec:Rademacher}

\begin{quotation}
\emph{If you want to climb the Matterhorn you might first wish to go to 
Zermatt where those who have tried are buried.}\\
\hspace*{\fill}George P\'olya\footnote{Quoted by 
Lars H\"ormander, see~\cite[pg.~69]{BCRW}.}
\end{quotation}

In 1943, Hans Rademacher~\cite{Berndt-Rademacher} 
submitted a proof\footnote{Time~\cite{Time} says
``disproving the Riemann Hypothesis'', but this is likely to be an
error, see~\cite{Mathoverflow-Rademacher}.}
of the Riemann Hypothesis (RH)
to \emph{Transactions of the American Mathematical Society}. It was
accepted and was scheduled to appear in the May 1945 issue.
At the last moment, Rademacher withdrew the paper, because
Carl Siegel had found a flaw in his reasoning.
The paper did not appear in the \emph{Transactions}, but the story was
told to a wide audience in \emph{Time} magazine~\cite{Time}.

Since Rademacher's manuscript is not available, we can not say what his
error was. However, there have been
suggestions (see e.g.~\cite{Mathoverflow-Rademacher}) that he used a proof by
contradiction. In other words, he assumed the existence of a zero
$\rho$ of $\zeta(s)$ with $\Re(\rho) > 1/2$, and from this assumption he
derived a contradiction, implying that $\Re(\rho) \le 1/2$. This would prove
RH. However, in any proof by contradiction, one has to be extremely careful,
as an error in the proof might lead to an erroneous contradiction.

We remark that several other mathematicians have claimed to prove RH. 
Some serious attempts are mentioned in~\cite[Ch.~8]{BCRW}.
For other attempts, see~\cite{Watkins}.
Roger Heath-Brown (quoted in~\cite[pg.~112]{Sabbagh}) comments:
\begin{quotation}
\emph{I receive unsolicited manuscripts quite frequently~-- one finds a
particular person who has an idea, and no matter how many times you point
out a mistake, they correct that and produce something else that is also a
mistake.}
\end{quotation}

\subsection{Wiles and the proof of Fermat's Last Theorem}
						\label{subsec:Wiles}

What we now call \emph{Fermat's Last Theorem} (FLT) was mentioned by
Pierre de Fermat around 1637 in the margin of his copy of Diophantus's
\emph{Arithmetica}. The story is too well-known to warrant repeating here.

There have been a great many erroneous proofs of FLT, 
see~\cite{Wiki-FLT-history}.
The list might well include Fermat's proof, referred to in his 
marginal note~-- we will never know with certainty,
but in view of developments over the next
$350$ years, it seems very unlikely that his proof was correct.

In June 1993, Andrew Wiles announced a proof of the Taniyama-Shimura conjecture for
semistable elliptic curves. By previous work of Ribet and of Frey (building on
earlier work by other mathematicians), this was known to imply FLT.

Wiles presented his proof at a series of lectures (in Cambridge, UK),
and submitted a paper to \emph{Annals of Mathematics}. 
However, while refereeing the
paper, Nick Katz found a gap in the proof. 

Wiles worked to repair his proof, first alone, and then with his former
student Richard Taylor.  By September 1994 they were almost ready to admit
defeat.  Then, while trying to understand why his approach could not be made to
work, Wiles had a sudden insight.
\begin{quotation}
\emph{I was sitting at my desk examining the Kolyvagin–Flach method.  It wasn't
that I believed I could make it work, but I thought that at least I could
explain why it didn't work.  Suddenly I had this incredible revelation.  I
realised that, the Kolyvagin–Flach method wasn't working, but it was all I
needed to make my original Iwasawa theory work from three years earlier.  So
out of the ashes of Kolyvagin–Flach seemed to rise the true answer to the
problem.  It was so indescribably beautiful; it was so simple and so
elegant.}\\[5pt]
\hspace*{\fill}Andrew Wiles, quoted by Simon Singh~\cite{Singh}.
\end{quotation}

Wiles's insight led to a corrected proof of FLT, published in 1995 as two
papers, one by Wiles~\cite{Wiles}, and a companion paper by Taylor and 
Wiles~\cite{Taylor-Wiles}.
For a non-technical overview, see \cite{Singh,Wiki-FLT-proof}.

\begin{table}[ht]
\begin{center}
\begin{tabular}{l l l}\hline\\[-5pt]
Author			& \textbf{Poincar\'e} 		& \textbf{Wiles}\\[2pt]
Topic			& \textbf{Three-body problem}   & \textbf{FLT}\\[2pt]
Date			& $1888$--$1892$		&$1993$--$1995$\\[2pt]
Error(s) found by	& Phragm\'en		 	& Katz \\[2pt]
Corrected by		& Poincar\'e			& Wiles and 
							  Taylor\\[2pt]
Some further		& Theory of chaos in		& Modularity theorem\\
developments		& dynamical systems		& proved by Breuil,\\
			& developed by Poincar\'e	& Conrad, Diamond,\\
			& and later authors		& and Taylor 
							  (2001)\\[5pt]
\hline
\end{tabular}
\end{center}
\vspace*{-10pt}
\caption{Comparing the  sagas of Poincar\'e and Wiles}
\label{tab:comparison}
\end{table}

The story (or perhaps we can justifiably call it a \emph{saga}) 
of Wiles's proof of FLT has some
striking analogies to the story/saga of Poincar\'e's prize \hbox{essay}
(\S\ref{subsec:Poincare}). Wiles's original manuscript had an error (discovered
by Katz); Poincar\'e's had errors (discovered by Phragm\'en).
Both papers contained other results that were of value, but the errors
invalidated their main results. The errors were corrected, though not without
difficulty,
and the corrected results were published and have been accepted as valid. 
Both papers led to further
developments: in the case of Wiles,
proof of the Taniyama-Shimura-Weil conjecture for all elliptic
curves (a result now known as the \emph{modularity theorem})
by Breuil, Conrad, Diamond, and Taylor~\cite{modularity};
in the case of  Poincar\'e,  
the development of chaos theory by Poincar\'e and many subsequent authors.
The analogies are summarised in Table~\ref{tab:comparison}.

\subsection{Mochizuki's claimed proof of the \emph{abc} conjecture}
						\label{subsec:Mochizuki}

The \emph{abc conjecture}\footnote{Also, perhaps more informatively, known
as the \emph{Oesterl\'e-Masser conjecture} after its proposers
David Masser~\cite{Masser} and Joseph Oesterl\'e~\cite{Oesterle}.}
considers three relatively prime positive integers $a, b, c$
such that $a+b=c$. Let $d$ be the product of the distinct prime factors of
$abc$ (hence the name ``$abc$ conjecture''). 
The conjecture gives a lower bound on $d$ that (conjecturely) holds
for all but a finite number of cases.\footnote{The bound on the finite number
$N(\varepsilon)$ of exceptional cases depends on a positive but arbitrarily 
small parameter $\varepsilon$; for details see~\cite{Wiki-abc}.}

The $abc$ conjecture was shown by Oesterl\'e~\cite{Oesterle} to be
essentially equivalent to the Szpiro conjecture,
which relates the conductor and the discriminant of an elliptic curve.
If true, these (equivalent) conjectures 
would have many interesting consequences.
Some of these consequences are already
known to be true, e.g.\ Roth's theorem and
the theorem of Faltings
(formerly known as the Mordell conjecture);
others still have the status of conjectures, 
e.g.\ the Fermat-Catalan conjecture, which is almost\footnote{Not quite
a generalisation, because it allows a finite number of exceptions.}
a generalisation of FLT.

In August 2012, Shinichi Mochizuki claimed a proof of Szpiro's conjecture 
(and hence the $abc$ conjecture) by
developing a new theory called 
\emph{inter-universal Teichmüller theory} (IUTT)~\cite{Ball}.
However, the proof has not been generally 
accepted by the mathematical community~\cite{Wiki-Szpiro}.
On one side we have views such as:
\begin{quotation}
\emph{We, 
the authors of this note, came to the conclusion that there is no proof. 
We are going to explain where, in our opinion, the suggested proof has a
problem, a problem so severe that in our opinion small modifications will
not rescue the proof strategy.  We supplement our report by mentioning
dissenting views from Prof.\ Mochizuki and Prof.\ Hoshi about the issues we
raise with the proof and whether it constitutes a gap at all.}\\
\hspace*{\fill}Peter Scholze and Jakob Stix~\cite{Scholze-Stix-2018}.
\end{quotation}
On the other side, Mochizuki still claims that his proof is correct,
and has published it in a refereed journal~\cite{Castelvecchi}.
Mochizuki was at that time
the editor-in-chief of the journal.\footnote{As many of the examples 
in~\cite{Wiki-incomplete-proofs} show, 
publication in a refereed journal is not in itself a guarantee of correctness.
Conversely, some important and correct papers were never published in
refereed journals~-- one example is the work of Perelman on
the Poincar\'e conjecture, which solved one of the Millennium Prize Problems,
but was only published on the preprint server arXiv.}

At the present time, all that we can say with certainty is that the status of 
Mochizuki's proof is unclear. For further information, see
\cite{Klarreich,Scholze-Stix-2018,Woit1},
and comments on MathOverflow.

\subsection{Summary of the examples above}	\label{subsec:summary}

\begin{quotation}
\emph{A new scientific truth does not triumph by convincing its
\hbox{opponents} and
making them see the light, but rather because its opponents eventually die
and a new generation grows up that is familiar with it.}\\
\hspace*{\fill}Max Planck~\cite{Planck}
\end{quotation}

We have given examples of errors that were found and
corrected, see \S\ref{subsec:Poincare} (Poincar\'e) and
\S\ref{subsec:Wiles} (Wiles).
Also, we have mentioned some proofs that were fatally flawed,
but a different (correct) proof of the result 
was found later, see \S\ref{subsec:four-colour} 
(Kempe, Tait, Appel and Haken).
In other cases an error was found and acknowledged by the author,
and no correct proof of the result is known,
see \S\ref{subsec:Rademacher} (Rademacher).

Yet another category is where a proof is unlikely to be correct,
but this is impossible to verify, as the author has died and the
claimed proof was never written down (or has been lost), 
see \S\ref{subsec:Stieltjes} (Stieltjes).
Fermat's claimed proof (of FLT) is in this category.

Finally, we have given one example of a proof that is disputed, in that the
author maintains that it is correct, but a significant number of experts
disagree, see \S\ref{subsec:Mochizuki}. In such cases, time will tell~--
eventually the proof will be accepted as correct,\footnote{Possibly
after being rewritten to fill in gaps and make it more readily
comprehensible.
}
or the author and his supporters will
die and the proof will be relegated to ``the dustbin of history''.

\section{Ag\'elas and Vassilev-Missana}		\label{sec:VMA}

In March 2021, L\'eo Ag\'elas sent the author a preprint
that had appeared online~\cite{Agelas} and had also
been submitted to a journal.
In~\cite{Agelas}, Ag\'elas states\footnote{We use the word ``Claim''
for a statement that we may later prove to be false, in order to
distinguish it from a statement that we believe to be true.}

\begin{claim}[Ag\'elas, Theorem 2.1]		\label{claim:1}
For any Dirichlet character $\chi$ modulo $k$, the Dirichlet L-function
$L(\chi, s)$ has all its non-trivial zeros on the critical line
$\Re(s) = \frac12$.
\end{claim}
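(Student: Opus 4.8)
The plan one would need here runs immediately into a wall: Claim~\ref{claim:1} is exactly the Generalized Riemann Hypothesis for Dirichlet $L$-functions, a statement that has resisted proof for well over a century and that most analytic number theorists regard as far beyond current techniques. So I would approach any purported proof with extreme skepticism, and what follows is less a route to success than a prediction of where such an argument must go wrong.

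If one were nonetheless to attempt it, the most natural strategy~-- and, historically, the one most prone to hidden error (recall Rademacher's withdrawn manuscript, \S\ref{subsec:Rademacher})~-- is proof by contradiction: assume a non-trivial zero $\rho = \beta + i\gamma$ of $L(\chi,s)$ with $\beta > \tfrac12$, and derive a contradiction. First I would note that the functional equation relating $L(\chi,s)$ to $L(\overline{\chi},1-s)$ already forces the non-trivial zeros to be symmetric about the critical line, so one is really trying to exclude zeros in the open strip $\tfrac12 < \Re(s) < 1$. The usual tools for that are the explicit formula linking zeros to prime sums, Weil-style positivity criteria, or growth and convexity estimates for $L(\chi,s)$ in the critical strip. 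The key point is that \emph{any} genuine proof must, somewhere, establish an analytic estimate strictly stronger than what is currently known unconditionally; so I would begin by locating precisely which inequality the argument claims to improve, because that is where the error must lie.

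The hard part~-- indeed the only part~-- is therefore to pin down the unjustified step. The failure modes I would check, in order, are: (i) an interchange of limit, sum, or integral that is not absolutely (or even conditionally) justified; (ii) an appeal to an asymptotic or error term whose dependence on $\chi$, the modulus $k$, or the height $|\gamma|$ is not uniform, so that a ``contradiction'' is manufactured only by treating a genuinely $O(1)$ or $o(1)$ quantity as negligible; (iii) a positivity or monotonicity assertion that in fact holds only on part of the relevant region; or (iv) a circular appeal to RH-type input~-- a zero-free region, a bound on $\log L(\chi,s)$, or a Lindel\"of-type estimate~-- disguised as something elementary. Given the track record of such manuscripts, my expectation is that the culprit is (ii) or (iv), and that no small modification will repair it~-- precisely the situation described by Heath-Brown in \S\ref{subsec:Rademacher} and by Scholze and Stix in \S\ref{subsec:Mochizuki}.
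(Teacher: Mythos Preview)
Your instinct is exactly right: Claim~\ref{claim:1} is the Generalized Riemann Hypothesis, and the paper does \emph{not} prove it. The word ``Claim'' is deliberate; the whole point of \S\ref{sec:VMA} is to explain why Ag\'elas's purported proof fails. So there is no ``paper's own proof'' to compare against, and your refusal to offer one is the correct mathematical response.

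Where your write-up falls short is that it stays at the level of generic speculation about failure modes~--- interchanges of limits, non-uniform error terms, hidden RH-type input~--- whereas the paper locates the error precisely and concretely. Ag\'elas's argument rests on his Lemma~2.3 (Claim~\ref{claim:3} here), which for the principal character reduces to the identity
\[
\frac{2}{\zeta(s)} = 2 - 2P(s) + P(s)^2 - P(2s),
\]
borrowed from Vassilev-Missana. The paper disproves this identity outright, by three independent methods: comparing Dirichlet coefficients (the term $30^{-s}$ appears on the left with coefficient $-2$ but is absent on the right, since $30$ has three prime factors), numerical evaluation at $s=2$ (the two sides differ by more than $0.007$), and singularity analysis near $s=1$ (simple zero on the left, $(\log(s-1))^2$ behaviour on the right). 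Once Lemma~2.3 collapses, so does Ag\'elas's Theorem~2.1.

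Note that the actual error does not fit any of your categories (i)--(iv): it is instead the use of a \emph{false published identity} as input, the pitfall listed as item~5 in \S\ref{sec:conclusion}. Your checklist of analytic-number-theory failure modes is sensible for a proof that tries to bound $L(\chi,s)$ directly, but Ag\'elas's argument is algebraic in flavour and fails at a much more elementary level~--- a level where, as the paper emphasises, a quick numerical check in Maple or Sage would have exposed the problem immediately.
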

This is the \emph{Generalized Riemann Hypothesis} (GRH), probably
first 
formulated by Adolf Piltz in 1884 
(see Davenport~\cite[p.~124]{Davenport}).	
A special case, which corresponds to the principal character
$\chi_0(n) = 1$ and the Riemann zeta-function $\zeta(s)$,
is the Riemann Hypothesis, cf.\ \S\ref{subsec:Rademacher}.

Ag\'elas defines the half-plane
${\mathcal A} := \{s\in\C : \Re(s) > 1\}$, and
two Dirichlet series (convergent for $s \in \mathcal A$):
\[ 
P(\chi,s) := \sum_{p\in\primes}\chi(p)p^{-s}
\]
and
\[ 
P_2(\chi,s) := \sum_{p\in\primes}\chi(p)^2p^{-s},
\]
where $\primes$ is the set of primes $\{2, 3, 5, \ldots\}$.

When trying to understand the proof of Claim~\ref{claim:1} 
by Ag\'elas, we considered the case of the Riemann zeta-function.
Since this was sufficient to find an error in the proof, we
only need to consider this case. Thus we can take $\chi(p) = 1$, 
so $P(\chi,s)$ and $P_2(\chi,s)$ both reduce to
the usual \emph{prime zeta function}~\cite{Froberg}
\[
P(s) := \sum_{p\in\primes}p^{-s}.
\]
It is well-known~\cite[p.~188]{Froberg} that, 
for $\Re(s) > 1$,
\begin{equation}				\label{eq:P_Mobius}
P(s) = \sum_{k=1}^\infty \frac{\mu(k)}{k}\log\zeta(ks).
\end{equation}
Vassilev-Missana~\cite{Vassilev-Missana} states
\begin{claim}[Vassilev-Missana, Theorem 1]		\label{claim:2}
For integer%
\footnote{It is not clear why Vassilev-Missana imposes such a
strong restriction on $s$; we might expect the relation to hold
for all $s \in \mathcal A$, or even (using analytic continuation)
for almost all $s \in \{z\in\C: \Re(z) > 0\}$.}
$s > 1$, the relation
\[
(1-P(s))^2 = \frac{2}{\zeta(s)} - 1 + P(2s)
\;\;\text{ holds.}
\]
\end{claim}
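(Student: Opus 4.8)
The plan is to \emph{test} the claimed identity before attempting to prove it. First I would expand the left-hand side,
\[
(1-P(s))^2 = 1 - 2P(s) + P(s)^2,
\]
and evaluate $P(s)^2$ by splitting the double sum over pairs of primes into its diagonal and off-diagonal parts:
\[
P(s)^2 = \sum_{p}p^{-2s} + 2\sum_{p<q}(pq)^{-s} = P(2s) + 2\sum_{p<q}(pq)^{-s}.
\]
Substituting this and cancelling the term $P(2s)$ against the $P(2s)$ on the right, Claim~\ref{claim:2} is seen to be equivalent to
\[
\frac{1}{\zeta(s)} = 1 - P(s) + \sum_{p<q}(pq)^{-s}.
\]

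Next I would confront this with the Euler product $1/\zeta(s) = \prod_{p}(1-p^{-s}) = \sum_{k\ge 0}(-1)^k e_k$, where $e_k$ is the $k$-th elementary symmetric sum of $\{p^{-s}:p\in\primes\}$, so that $e_0=1$, $e_1=P(s)$, and $e_2=\sum_{p<q}(pq)^{-s}$. Hence the claim amounts to the assertion
\[
e_3 - e_4 + e_5 - \cdots = 0 .
\]
The decisive step --- not difficult, but easy to skip --- is to observe that this alternating tail does \emph{not} vanish: as $s\to\infty$ it is asymptotic to $e_3\sim 30^{-s}>0$, and a numerical check at $s=2$ already gives $(1-P(2))^2\approx 0.3000$ while $2/\zeta(2)-1+P(4)\approx 0.2928$. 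So the identity is false, and restricting $s$ to integers does nothing to save it.

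Consequently there is no theorem to prove here; the instructive exercise is instead to \emph{disprove} the claim and to pinpoint the flaw in the original argument. My expectation is that the error comes from treating the low-order terms of $1/\zeta(s)=\prod_p(1-p^{-s})$ as if the expansion terminated after the quadratic terms --- in effect silently truncating an infinite product --- or from an illegitimate rearrangement involving the M\"obius relation~\eqref{eq:P_Mobius}. The genuine obstacle is therefore methodological rather than technical: one must resist the temptation to certify a plausible-looking identity without first evaluating both sides at a single value of~$s$.
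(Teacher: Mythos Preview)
Your proposal is correct and aligns with the paper's own treatment: the paper likewise shows that Claim~\ref{claim:2} is \emph{false}, using the observation that the Dirichlet coefficient at $n=30$ differs on the two sides (your $e_3\sim 30^{-s}$ argument is the same observation in symmetric-function dress), together with a numerical check at $s=2$. Your diagnosis of the original flaw --- tacitly truncating the Euler product for $1/\zeta(s)$ after the quadratic terms --- also matches the paper's conclusion that Vassilev-Missana failed to account for integers with more than two distinct prime factors.
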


Ag\'elas states 
\begin{quotation}
\emph{Lemma 2.3 appears as an extension of Theorem 1 of Vassilev-Missana (2016),
we give here the details of the proof as it is at the heart of the Theorem
obtained in this paper. For this, we borrow the arguments used in 
Vassilev-Missana (2016).}
\end{quotation}
\pagebreak[3]

\noindent He then states
\begin{claim}[Ag\'elas, Lemma 2.3]			\label{claim:3}
For $s \in \mathcal A$, we have
\[
(1-P(\chi,s))^2L(\chi,s) - (P_2(\chi,2s)-1)L(\chi,s) = 2.
\]
\end{claim}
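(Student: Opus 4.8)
\medskip
\noindent\emph{A natural proof attempt.}
As noted above, only the case $\chi(p)=1$ (where $L(\chi,s)=\zeta(s)$,
$P(\chi,s)=P(s)$ and $P_2(\chi,2s)=P(2s)$) is actually needed, but the general
case is no harder; so the plan is to keep $\chi$ arbitrary and reduce
Claim~\ref{claim:3} to a clean identity for the prime zeta function,
paralleling Vassilev-Missana's Theorem~1 (Claim~\ref{claim:2}).
First I would divide through by $L(\chi,s)$, which is legitimate since the
Euler product gives $L(\chi,s)\ne 0$ on $\mathcal A$; this recasts the claim as
\[
(1-P(\chi,s))^2 - \bigl(P_2(\chi,2s)-1\bigr) = \frac{2}{L(\chi,s)}.
\]
All the Dirichlet series here converge absolutely on $\mathcal A$, so I may
write $(1-P(\chi,s))^2 = 1 - 2P(\chi,s) + P(\chi,s)^2$ and, squaring the series
$P(\chi,s)$ and separating the diagonal from the off-diagonal terms,
$P(\chi,s)^2 = P_2(\chi,2s) + 2\sum_{p<q}\chi(p)\chi(q)(pq)^{-s}$.
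The two copies of $P_2(\chi,2s)$ cancel, and what remains is the identity
\[
\frac{1}{L(\chi,s)} = 1 - \sum_{p\in\primes}\chi(p)\,p^{-s}
 + \sum_{p<q}\chi(p)\chi(q)\,(pq)^{-s}.
\]

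To prove this, the obvious tool is the Euler product written as
$1/L(\chi,s) = \prod_{p}(1-\chi(p)p^{-s})$, which, by multiplicativity of $\mu$
and of $\chi$, expands to the Dirichlet series $\sum_{n\ge 1}\mu(n)\chi(n)n^{-s}$.
Grouping the squarefree $n$ by their number $k$ of distinct prime factors, the
term $k=0$ gives $1$, the terms with $k=1$ give $-\sum_p\chi(p)p^{-s}$, and the
terms with $k=2$ give $+\sum_{p<q}\chi(p)\chi(q)(pq)^{-s}$.
So the displayed identity would follow \emph{provided} the combined
contribution of all $n$ with $k\ge 3$ distinct prime factors vanishes.
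Establishing that vanishing is the crux of the argument, and it is exactly here
that I expect the plan to break down: the $k=3$ block is
$-\sum_{p<q<r}\chi(p)\chi(q)\chi(r)(pqr)^{-s}$, which for $\chi(p)=1$ reduces to
$-\sum_{p<q<r}(pqr)^{-s}$, plainly nonzero and not cancelled by the (smaller)
$k\ge 4$ blocks.

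For that reason, before investing effort in a full proof I would first run a
numerical sanity check, for example in Sage: evaluate both sides of the reduced
identity at $s=2$ in the case $\chi(p)=1$, using $\zeta(2)=\pi^2/6$ and a
truncation of the rapidly convergent sums $P(2)=\sum_p p^{-2}$ and
$P(4)=\sum_p p^{-4}$ (or obtaining $P(2)$ and $P(4)$ from~\eqref{eq:P_Mobius}).
Even more diagnostically, I would compare low-order Dirichlet coefficients: the
coefficient of $30^{-s}$ equals $2\mu(30)=-2$ on the side coming from
$2/\zeta(s)$, but equals $0$ on the side coming from $(1-P(s))^2-(P(2s)-1)$,
since the latter involves only $1$, prime powers, and products of at most two
distinct primes.
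A discrepancy at this level would show that Claim~\ref{claim:2}, and hence
Claim~\ref{claim:3}, is false, the correct statement being the full M\"obius
expansion $1/L(\chi,s)=\sum_{n\ge 1}\mu(n)\chi(n)n^{-s}$ with all the higher
terms retained --- so the error would lie in the claimed identity, not in the
proof attempt. I would move on to a full proof only if this check passed, which
I do not expect it to.
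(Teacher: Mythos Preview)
Your analysis is correct and lands on the same conclusion as the paper: Claim~\ref{claim:3} is false, and your two diagnostics --- comparing the Dirichlet coefficient at $n=30$ (where $2/\zeta(s)$ contributes $2\mu(30)=-2$ while the other side contributes $0$) and the numerical check at $s=2$ --- are exactly Method~1 and Method~2 of \S\ref{subsec:disproof}. Your extra step of writing out $1/L(\chi,s)=\sum_{n\ge 1}\mu(n)\chi(n)n^{-s}$ and grouping squarefree $n$ by the number of prime factors makes the source of the error (discarding all $k\ge 3$ terms) more explicit than the paper does, but the substance of the disproof is the same.
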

In the case that we consider, namely $L(\chi,s) = \zeta(s)$,
both Claim~\ref{claim:2} and Claim~\ref{claim:3} amount to the same
relation, which we can write in an equivalent form as
\begin{equation}					\label{eq:the_claim}
\frac{2}{\zeta(s)} = 2 - 2P(s) + (P(s))^2 - P(2s).
\end{equation}

In \S\ref{subsec:disproof} we
show that~\eqref{eq:the_claim} is false.
This implies that Lemma~2.3 of Ag\'elas is false, as is Theorem~1 of
Vassilev-Missana.  Theorem~2.1 of Ag\'elas (the GRH) may be true,
but has not been proved. 
Theorem~2 of Vassilev-Missana is false, 
as shown in~\S\ref{subsec:Thm2}.

\subsection{Disproving claim (\ref{eq:the_claim})}	\label{subsec:disproof}

Five methods to disprove~\eqref{eq:the_claim} are given in~\cite{rpb277}.
We give three of them here. The methods that are omitted here involve analytic
continuation into the strip $0 < \Re s \le 1$.\\

\noindent{\bf Method 1.}
Expand each side of~\eqref{eq:the_claim} as a Dirichlet series
$\sum a_n n^{-s}$.
On the right-hand side (RHS), the only terms with nonzero coefficients $a_n$
are for integers $n$ of the form $p^\alpha q^\beta$, where $p$ and $q$ are
primes, $\alpha \ge 0$, and $\beta \ge 0$.  However, on the left-hand side 
(LHS), we find $a_{30} = -2 \ne 0$, since $30 = 2\times 3\times 5$ has
three distinct prime factors, implying that \hbox{$\mu(30) = -1$}.
By the uniqueness of Dirichlet series that converge absolutely for all
sufficiently large values of $\Re(s)$ \cite[Thm.~4.8]{Hildebrand},
we have a contradiction, so~\eqref{eq:the_claim} is false. 
\begin{remark}
{\rm
Instead of $30$ we could take any squarefree positive integer 
with greater than two prime factors.  
} 
\end{remark}

\noindent{\bf Method 2.} 
We can evaluate both sides of~\eqref{eq:the_claim} numerically
for one or more convenient values of~$s$.
If we take $s = 2k$ for some positive integer $k$, then
the LHS of~\eqref{eq:the_claim}
can easily be evaluated using Euler's formula
\[\zeta(2k) = \frac{(-1)^{k-1}(2\pi)^{2k}}{2\cdot (2k)!}\,B_{2k}\,,\]
where $B_{2k}$ is a Bernoulli number.
The RHS can be evaluated by using~\eqref{eq:P_Mobius}.
Taking $k=1$, i.e.\ $s=2$, the LHS is
$12/\pi^2 = 1.2158542$ and the RHS is $1.2230397$ 
(both values correct to $7$ decimals). 
Thus, $|\text{LHS}-\text{RHS}| > 0.007$. 
This is a contradiction, so~\eqref{eq:the_claim} is false.	
\begin{remark}					\label{remark:numer_verif}
{\rm
It is always a good idea to verify identities numerically whenever it is
convenient to do so. A surprising number of typographical 
and more serious errors
can be found in this manner. Early mathematicians such as Euler, Gauss,
and Riemann were well aware of the value of numerical computation, even
though they lacked the electronic tools and mathematical software
(such as Maple, Magma, Mathematica, SAGE, \ldots) that are available today.

If we had followed the philosophy of 
experimental mathematics~\cite{Borwein}, we would have attempted
method~2 first. However, method~1 has the advantage that all computations
are easy to do by hand (or mental arithmetic). Method~2 is slightly more
work, as it requires writing a small program.
} 
\end{remark}

\noindent{\bf Method 3.}
We consider the behaviour of each side of~\eqref{eq:the_claim} near
$s = 1$. On the LHS there is a simple zero at $s=1$, since the denominator
$\zeta(s)$ has a simple pole. On the RHS there is a logarithmic
singularity of the form
$a(\log(s-1))^2 + b\log(s-1) + O(1)$. 
This is a contradiction, so~\eqref{eq:the_claim} is false.	

\pagebreak[3]

\subsection{Theorem 2 of Vassilev-Missana is false}	\label{subsec:Thm2}

Vassilev-Missana~\cite[Theorem 2]{Vassilev-Missana} makes the following
claim.
\begin{claim}					\label{claim:4}
For integer
 $s > 1$,
\begin{equation}				\label{eq:thm2}
P(s) =
1-\sqrt{2/\zeta(s) - \sqrt{2/\zeta(2s) - \sqrt{2/\zeta(4s) - 
  \sqrt{2/\zeta(8s) - \cdots}}}}
\end{equation}
\end{claim}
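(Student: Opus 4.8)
The plan is to show that the nested‐radical identity \eqref{eq:thm2} is a formal consequence of the (false) relation \eqref{eq:the_claim}, so that disproving the latter automatically disproves the former; then, for good measure, to exhibit a direct numerical contradiction so that Claim~\ref{claim:4} does not rest only on the failure of Claim~\ref{claim:2}. First I would rewrite \eqref{eq:the_claim} in the form $(1-P(s))^2 = 2/\zeta(s) - 1 + P(2s)$, i.e.\ $1-P(s) = \sqrt{2/\zeta(s) - (1 - P(2s))}$, where the nonnegative square root is the correct branch because $0 < P(s) < 1$ for real $s>1$. Iterating this with $s$ replaced by $2s, 4s, 8s, \dots$ gives
\[
1 - P(s) = \sqrt{2/\zeta(s) - 1 + P(2s)}
          = \sqrt{2/\zeta(s) - \sqrt{2/\zeta(2s) - 1 + P(4s)}} = \cdots,
\]
and \eqref{eq:thm2} is exactly the limit of this process. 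So the heart of the matter is a convergence argument: one must check that the finite nested radicals converge as the depth tends to infinity. Since $P(2^{j}s)\to 0$ and $2/\zeta(2^{j}s)\to 2$ as $j\to\infty$, the tail behaves like the constant nested radical $\sqrt{2-\sqrt{2-\sqrt{2-\cdots}}}$, whose value is $1$; a Cauchy‐type estimate, bounding the effect of truncating at depth $n$ by something like $\prod_{j<n} (\text{local Lipschitz constant of } x\mapsto\sqrt{c-x})$, shows the limit exists and equals $1-P(s)$ \emph{provided} \eqref{eq:the_claim} holds at $s, 2s, 4s, \ldots$.

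Having established that \eqref{eq:thm2} follows from \eqref{eq:the_claim}, the disproof in \S\ref{subsec:disproof} of \eqref{eq:the_claim} already shows Claim~\ref{claim:4} cannot be proved by the route Vassilev-Missana takes. To show \eqref{eq:thm2} is actually \emph{false}, I would evaluate both sides at a convenient even integer, say $s=2$, exactly as in Method~2 above: the left side is $P(2) = \sum_p p^{-2} = 0.4522474\ldots$, computable to high precision via \eqref{eq:P_Mobius} and Euler's formula for $\zeta(2k)$, while the right side $1 - \sqrt{2/\zeta(2) - \sqrt{2/\zeta(4) - \cdots}}$ can be evaluated by truncating the nested radical at a modest depth (the tail converges geometrically, so a dozen levels give many correct digits). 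These two numbers disagree — indeed they must, since the identity $(1-P(2))^2 = 2/\zeta(2) - 1 + P(4)$ underlying the first level of the radical already fails by more than $0.007$ — and the discrepancy exceeds any plausible numerical error, giving a contradiction. Hence Claim~\ref{claim:4} is false.

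The main obstacle I anticipate is not the numerics but making the convergence of the nested radical rigorous: one needs the radicands $2/\zeta(2^{j}s) - 1 + P(2^{j+1}s)$ to stay in the range where the real square root is defined and the iteration is a contraction, and one needs a clean bound on how fast the truncation error decays. This is where care is required, because a nested radical can converge to the ``wrong'' value, or fail to converge, if the branch of the square root is chosen inconsistently or if the radicands stray outside $[0,1)$. Once that bookkeeping is done, the rest is routine: the algebraic unrolling of \eqref{eq:the_claim} is immediate, and the numerical contradiction at $s=2$ is a two‐line computation. It is worth noting, as a sanity check, that the \emph{formal} identity \eqref{eq:thm2} would be correct if \eqref{eq:the_claim} were correct; the falsity of Claim~\ref{claim:4} is thus ``inherited'' from the falsity of Claim~\ref{claim:2}, which is the pedagogically instructive point — an error in one result propagates into every result derived from it.
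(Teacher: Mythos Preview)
Your opening sentence contains a genuine logical error: you propose to show that \eqref{eq:thm2} is a \emph{consequence} of \eqref{eq:the_claim}, and then conclude that ``disproving the latter automatically disproves the former.'' That is the wrong direction of implication. If $A \Rightarrow B$ and $A$ is false, $B$ may still be true; establishing that \eqref{eq:the_claim} implies \eqref{eq:thm2} tells you nothing about the truth of \eqref{eq:thm2} once \eqref{eq:the_claim} is known to fail. You partly recognise this yourself when you retreat to saying only that ``Claim~\ref{claim:4} cannot be proved by the route Vassilev-Missana takes,'' and then fall back on numerics for the actual disproof. The numerical argument at $s=2$ is valid (and matches the paper's alternative, which gives $P(2)\approx 0.4522$ versus a right-hand side $\approx 0.4588$), so your proposal does ultimately succeed---but only via the backup, not via the main line you describe.

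The paper's proof runs the implication the other way, and this is both logically correct and far simpler. Assume Claim~\ref{claim:4} holds for all integers $s>1$; then it holds with $s$ replaced by $2s$, and by the self-similarity of the nested radical the inner expression in \eqref{eq:thm2} equals $1-P(2s)$. Thus \eqref{eq:thm2} collapses to $1-P(s)=\sqrt{2/\zeta(s)-(1-P(2s))}$, and squaring gives \eqref{eq:the_claim}. Since \eqref{eq:the_claim} is false, Claim~\ref{claim:4} is false. Notice that this direction requires \emph{no} convergence analysis of the infinite radical, no Lipschitz bounds, no tracking of radicand ranges---all the bookkeeping you correctly identify as ``the main obstacle'' simply evaporates. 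The moral is that when disproving a claimed identity, you want the identity as hypothesis, not as conclusion.
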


\begin{proof}[Proof that Claim $\ref{claim:4}$ is incorrect]
Assume that Claim~$\ref{claim:4}$ is correct.
Replacing $s$ by $2s$ and using the result
to simplify~\eqref{eq:thm2}, we obtain
\begin{equation}			\label{eq:reverse-eng}
1 - P(s) = \sqrt{2/\zeta(s) - (1-P(2s))}.
\end{equation}
Squaring both sides of~\eqref{eq:reverse-eng} and simplifying 
gives~\eqref{eq:the_claim}, but we showed in \S\ref{subsec:disproof}
that~\eqref{eq:the_claim} is incorrect. This contradiction shows
that~Claim~$\ref{claim:4}$ is incorrect.
\end{proof}

\begin{remark}
{\rm
An alternative is to resort to a variation on method~2 above.
With $s=2$
we find numerically that 	
$P(s) \approx 0.4522$		
and 
\[
1-\sqrt{2/\zeta(s) - \sqrt{2/\zeta(2s) - \sqrt{2/\zeta(4s) - \cdots}}}
\approx 0.4588 \ne P(s),	
\]
where the numerical values are correct to $4$ 
decimal places. Thus,~\eqref{eq:thm2} is incorrect.
} 
\end{remark}
\begin{remark}					\label{remark:defn}
{\rm
It may not be clear what the infinite expression
on the RHS of~\eqref{eq:thm2} means. 
We state Claim~\ref{claim:4} more precisely
as
\begin{equation}				\label{eq:precise}			
P(s) = 
1- \lim_{n\to\infty}
   \sqrt{2/\zeta(s) - \sqrt{2/\zeta(2s) - \sqrt{2/\zeta(4s) - \cdots 
  \sqrt{2/\zeta(2^n s)}}}}\;.
\end{equation}
The limit exists and is real if $s$ is real, positive, and sufficiently
large.

To evaluate~\eqref{eq:precise} numerically, we could start with
a sufficiently large value of $n$, then evaluate the nested
square roots 
in~\eqref{eq:precise} by working from right to left, using the
values of $\zeta(2^n s), \zeta(2^{n-1}s), \ldots, \zeta(2s), \zeta(s)$.
In fact, it is desirable to replace $2/\zeta(2^ns)$ in~\eqref{eq:precise}
by $2/\zeta(2^ns)-1$, as this gives the same limit but with faster
convergence (for details see~\cite{rpb277}).
} 
\end{remark}

\section{Errors in, or relevant to, the author's own work} \label{sec:Brent}

In this section we discuss three of the author's papers. The first,
concerned with the analysis of the binary Euclidean algorithm, had some
significant errors
which were only noticed (and corrected) $21$ years after the paper was
published, although the paper had been referred to several times in the
intervening period.  

The second paper (\S\ref{subsec:rpb055}), 
concerning integer multiplication,
contained a conjecture which, although believed by the
authors and supported by numerical evidence, was false.
This was shown by Erd\H{o}s in a 1960 paper~\cite{Erdos1960}
that, unfortunately, was written
in Russian and difficult to access. Moreover, it
turned out that Erd\H{o}s's proof was incorrect, and was only corrected
by Erd\H{os} and S\'ark\"ozy some $20$ years later~\cite{Erdos1980}.

The third paper (\S\ref{subsec:rpb269}) pointed out
an (incorrect) implicit assumption made in several papers, by various authors, 
concerning fast algorithms based on the arithmetic-geometric mean.

\subsection{Analysis of the binary Euclidean algorithm}	\label{subsec:rpb037}

\begin{quotation}	
\emph{Tam complicat\ae\ evadunt, ut nulla spes superesse 
videatur.}\footnote{\emph{They come out so complicated that no hope appears 
to be left}  (Gauss referring to his analysis of the Euclidean algorithm).}\\
\hspace*{\fill}Gauss, notebook, 1800
\end{quotation}

My 1976 paper 
\cite{rpb037} proposed a heuristic probabilistic model for the 
\hbox{binary} Euclidean algorithm.  
Some forty years later, the heuristic assumptions of the  model
were fully justified by Ian Morris~\cite{Morris},
building on earlier work by Brigitte Vall\'ee~\cite{Vallee}
and G\'erard Maze~\cite{Maze}.
The paper~\cite{rpb037} 
contained some significant errors which were not
noticed until 1997, when Donald Knuth was revising volume~$2$ of his
classic series \emph{The Art of Computer Programming} in preparation for
publication of the third edition~\cite{Knuth}. The errors
all take the same form, which we illustrate by considering a typical
case. Further details are given in~\cite[\S9]{rpb183}.

It is convenient to define $\lg x := \log_2 x = (\ln x)/\ln 2$.
Consider the function
$f:[0,\infty) \mapsto (0,\infty)$ defined by
\begin{equation}
f(x) := \sum_{k=1}^\infty \frac{2^{-k}}{1+2^k x}\,.	\label{eq:fD1}
\end{equation}
Observe that
\[
f'(x) = - \sum_{k=1}^\infty \left(\frac{1}{1+2^k x}\right)^2,
\]
where the series converges for all $x > 0$, and
as $x \to 0+$ we have $f'(x) \sim \lg x$.
Thus, we might expect $f(x)$ to have a logarithmic singularity of the form
$x\lg x$ at the origin (though this turns out to be incorrect, see below).

In~\cite[Lemma~3.1]{rpb037}, it is claimed that 
(with $f$ denoted by $D_1$ in~\cite{rpb037}),
\begin{equation}
f(x) = 1 + x\lg x + \frac{x}{2}
	-\frac{x^2}{1+x} + \sum_{k=1}^\infty\frac{(-1)^{k}x^{k+1}}{2^k-1}
						\label{eq:L3.1-incorrect}
\end{equation}
for $0 < x < 2$, but this is incorrect, as we shall show.

It was pointed out by Flajolet and Vall\'ee [personal communication, 1997]
that we can obtain an equivalent 
expression for $f(x)$ using Mellin transforms~\cite[App.~B.7]{FS}.
The Mellin transform of $g(x) := 1/(1+x)$ is
\[
g^*(s) = \int_0^\infty g(x)x^{s-1}\,dx = \frac{\pi}{\sin \pi s}
\]
in the strip $0 < \Re s < 1$.
Now $f(x) = \sum_{k\ge 1} 2^{-k}g(2^k x)$, so the Mellin transform of $f(x)$
is
\[ f^{*}(s) = \sum_{k=1}^\infty 2^{-k(s+1)}g^{*}(s)
        = \frac{g^{*}(s)}{2^{s+1} - 1}\;. \]
Using the inverse Mellin transform,
we can write $f(x)$ as a sum of residues~of
\begin{equation*}
h(s) := \left(\frac{\pi}{\sin \pi s}\right)\frac{x^{-s}}{2^{s+1} - 1} 
\end{equation*}
for $\Re s \le 0$. 
The function $h(s)$ has poles for $s \in \Z$ 
(where $\sin \pi s = 0$),
and also for $s = -1 + 2\pi i n/\ln 2$, $n\in\Z$
(where $2^{s+1}=1$). Note that there is a double pole at $s = -1$.
Evaluating the residues gives, for $x\in (0,1)$,
\begin{equation}					\label{eq:correctf2}
f(x) = 1 + x\lg x + \frac{x}{2}
	-\frac{x^2}{1+x} + \sum_{k=1}^\infty\frac{(-1)^{k}x^{k+1}}{2^k-1}
	+ xP(\lg x),
\end{equation}
where
\begin{equation}
P(t) = \frac{2\pi}{\ln 2}\sum_{n=1}^\infty \frac{\sin 2n\pi t}{
        \sinh(2n\pi^2/\ln 2)}		                \label{eq:Pt}
\end{equation}
is a small periodic function arising from the non-real poles of $h(s)$.

We observe that the correct expression~\eqref{eq:correctf2} differs from the
incorrect~\eqref{eq:L3.1-incorrect} 
precisely by the addition of the small term $xP(\lg x)$.

The reason for the error in the proof of Lemma~3.1 of \cite{rpb037} is that
it was simply \emph{assumed} that $f(x)$ could be written as
$\gamma(x)\lg(x) + \delta(x)$, where $\gamma(x)$ and $\delta(x)$ are
analytic and regular in the unit disk $|x| < 1$. 
Expressions for $\gamma(x)$ and $\delta(x)$ were then deduced, 
giving~\eqref{eq:L3.1-incorrect}. The error was not in the deduction
of~\eqref{eq:L3.1-incorrect}, but in the incorrect assumption regarding
the form of $f(x)$.

In retrospect, it should have been obvious that, if $x$ is regarded
as a complex variable, then~\eqref{eq:fD1} defines an analytic function
with poles at \hbox{$x = -2^{-k}$} for $k = 1, 2, 3, \ldots$. On the other hand,
$\gamma(x)\lg(x) + \delta(x)$ has only one singularity in the disk
$|x| < 1$, and that is the logarithmic singularity at $x=0$. Thus, as in
the example of \S\ref{subsec:disproof}, the singularities differ.

The reader may ask why we did not follow our own advice 
(see Remark~\ref{remark:numer_verif} in \S\ref{subsec:disproof}) 
and attempt to verify~\eqref{eq:L3.1-incorrect}
numerically.
In fact, we did verify the equality using floating point
arithmetic on the computer available to us at the time (1976).\footnote{We
used a Univac 1100/42 mainframe with a $36$-bit wordlength, and $27$ bits
for the floating-point fraction, equivalent to about $8$ decimals.
This was in the
days before personal computers or the IEEE 754 standard for
floating-point arithmetic, or the widespread availability of software for
high-precision arithmetic.}
This was insufficient to show a discrepancy, because
$|P(t)| < 7.8\times 10^{-12}$ for $t\in\R$.  
The reason why $P(t)$ is so small is that the denominators
$\sinh(2n\pi^2/\ln 2)$ in~\eqref{eq:Pt} are large;
the smallest denominator is $\sinh(2\pi^2/\ln 2) > 1.16\times 10^{12}$.
Nowadays, we would attempt a verification to much higher precision
(say $40$ decimals), 
and this would be sufficient to detect the discrepancy caused by
the term $xP(\lg x)$ in \eqref{eq:correctf2}.

The analysis of the binary Euclidean algorithm predicts that the 
\hbox{expected}
number of iterations is $\sim K\lg n$ for uniformly distributed
$n$-bit inputs. Here $K$ is a constant that can be expressed as an
integral, where the integrand includes a term involving $P(t)$.
In 1997, Knuth attempted to evaluate $K$ accurately, 
and obtained\\[-10pt]
\[K = 0.70597\,12461\,019\underbar{45}\cdots\,,\]\\[-22pt]
but I had found
\[K = 0.70597\,12461\,019\underbar{16}\,\cdots\,.\]\\[-15pt]
In a curious twist,
it turned out that Knuth's value was incorrect, because he relied on
some of the incorrect results in my paper~\cite{rpb037},
whereas my value was correct, because I had used a more direct numerical
method that depended only on recurrences for certain
distribution functions that were given correctly in~\cite{rpb037}.
With assistance from Flajolet and Vall\'ee, we reached agreement on
the correct value of $K$ just in time to meet the deadline for
the third edition of~\cite{Knuth}.

\subsection{The Brent-Kung multiplication paper}	\label{subsec:rpb055}

\begin{quotation}
\emph{If you only want him to be able to cope with addition and subtraction,
then any French or German university will do.  But if you
are intent on your son going on to multiplication and division~-- assuming
that he has sufficient gifts~-- then you will have to send him to
Italy.}\\
\hspace*{\fill} $15$-th century advice, 
quoted by Georges Ifrah~\cite[pg.~577]{Ifrah}.
\end{quotation}

In a 1982 paper~\cite{rpb055} with H.~T.~Kung,
we considered the area $A$ and/or time $T$ required 
to perform multiplication of $n$-bit
integers expressed in binary notation.
We showed that, in a certain realistic model of computation,
there is an area-time tradeoff, and $AT = \Omega(n^{3/2})$.
To obtain this bound we needed a lower bound on the function $M(N)$ defined
by\footnote{In~\cite{rpb055}, we used the notation $\mu(N)$ instead of 
$M(N)$.}
\begin{equation}				\label{eq:MN_def}
M(N) := |\{ij\,|\,0 \le i < N,\, 0 \le j < N\}|.
\end{equation}
\begin{remark}
{\rm In the number theory literature, it is customary to define
\[M(N) := |\{ij\,|\,0 < i \le N,\, 0 < j \le N\}|,\] 
which may be interpreted as the number of distinct entries in
an $N \times N$ multiplication table.
However, when considering $n$-bit binary multiplication, the
definition~\eqref{eq:MN_def} (with $N = 2^n$) is more natural.
Either definition can be used; it makes no difference to the
asymptotics.}
\end{remark}
For our purposes, it was sufficient to use the easy lower bound%
\footnote{Here and elsewhere, ``$\log$'' denotes the natural logarithm and
``$\lg$'' denotes the logarithm to base~$2$. 
Note that, in~\cite{rpb055},
``$\log$'' denotes the logarithm to base~$2$
and ``$\ln$'' is used for the natural logarithm.}
\[M(N) \ge \frac{N^2}{2\log N}\;\;\text{for all } N \ge 4.\]
We also investigated $M(N)$ numerically and, as a result of the
numerical evidence (see \cite[Table~II]{rpb055}),
\emph{conjectured} that
\begin{equation}				\label{eq:BK_conjecture}
\lim_{N\to\infty} \left(\frac{M(N)\,\lg\lg N}{N^2}\right) = 1.
\end{equation}

As numerical evidence for this conjecture, we found that,
for $5 \le n \le 17$ and $N = 2^n$, the following inequality holds:
\[
0.995 < M(N)/M^*(N) < 1.007, \text{ where } M^*(N) =
\frac{N^2}{0.71+\lg\lg N}\,.
\]
The constant $0.71$ here was chosen empirically to give a good fit to the
data; it does not affect the conjecture since
$M^*(N) \sim N^2/\lg\lg N$ as $N \to \infty$.

Shortly after publication of \cite{rpb055},
Paul Erd\H{o}s, in a letter to the author,  pointed out that 
the conjecture~\eqref{eq:BK_conjecture}
is \emph{false}, since it contradicts a result in a paper~\cite{Erdos1960}
that he published in 1960 (albeit in a rather inaccessible Russian journal).
In fact, he showed that
\begin{equation}			\label{eq:Erdos1960}
M(N) = \frac{N^2}{(\log N)^{c + o(1)}}\,,
\end{equation}
where
\[
c = 1 - \frac{1+\ln\ln 2}{\ln 2} \approx 0.086
\]
is a small positive constant.
Much later (in 2008), Ford~\cite[Corollary~3]{Ford} gave the more precise result
\begin{equation}			\label{eq:Ford}
M(N) \asymp \frac{N^2}{(\log N)^c (\log\log N)^{3/2}}\,.
\end{equation}
Erd\H{o}s's result~\eqref{eq:Erdos1960}
contradicts~\eqref{eq:BK_conjecture},
since $\log N$ grows faster than any power of $\log\log N$
as $N \to \infty$.
However, as the functions $(\log N)^c$ and $\log\log N$ grow very slowly, 
the true asymptotic rate of growth
of $M(N)$ was not evident from computations
with $N \le 2^{17}$.
For later computations with larger values of $N$, see~\cite{rpb272}.

We did, in some sense, have the last laugh, as it turned out that Erd\H{o}s's
proof of~\eqref{eq:Erdos1960} in~\cite{Erdos1960} was incorrect,
as it used a known result outside its domain of applicability.
This was pointed out by Karl Norton,	
and the proof was corrected in 
Erd\H{o}s and S\'ark\"ozy~\cite{Erdos1980}.\footnote{Erd\H{o}s 
and S\'ark\"ozy~\cite{Erdos1980} was published in 1980, 
so it is surprising that
Erd\H{o}s did not mention it to me in his letter of 1981.
I only became aware of it forty years later,
by following a chain of references on MathSciNet.
}

\subsection{Equivalence of some AGM algorithms}		\label{subsec:rpb269}

We now describe an \emph{implicit} error, where several authors made an
implicit assumption that was later shown to be incorrect.
Fortunately, this had no serious consequences.

The asymptotically fastest known algorithms for the high-precision
computation of $\pi$ are based on the arithmetic-geometric mean (AGM)
of Gauss and Legendre.
The first such algorithm was discovered in 1976 by the author~\cite{rpb034}
and (independently) by Salamin~\cite{Salamin76}. It is sometimes called the
\emph{Gauss-Legendre} algorithm, since it is based on results that can be
found in the work of these two mathematicians~\cite{rpb252}.
Subsequently, several other AGM-based algorithms were found by the
Borwein brothers~\cite{PAGM}.

We give two of the AGM-based algorithms, 
the Gauss-Legendre \hbox{algorithm} (GL1),
and the Borwein-Borwein quartic algorithm (BB4)~\cite{PAGM},
as presented in~\cite[\S4]{rpb269}.\\

\begin{samepage}
\noindent\textbf{Algorithm GL1}\\
\textbf{Input}: The number of iterations $n_{max}$.\\
\textbf{Output}: A sequence $(\pi_n')$ of approximations to $\pi$.
\begin{align*}
&a_0 := 1;\; b_0 := 1/\sqrt{2};\; s_0 := \textstyle\frac{1}{4};\\
&\textbf{for } n \text{ from } 0 \text{ to } n_{max}-1 \text{ do }\\
&\hspace*{2em}a_{n+1} := (a_n+b_n)/2;\\
&\hspace*{2em}c_{n+1} := a_n - a_{n+1};\\       
&\hspace*{2em}\textbf{output } \pi_n' := a_{n+1}^2/s_n.\\
&\hspace*{2em}\textbf{if } n < n_{max}-1 \textbf{ then}\\[-5pt]
&\hspace*{4em}b_{n+1} := \sqrt{a_n b_n};\\
&\hspace*{4em}s_{n+1} := s_n - 2^n\,c_{n+1}^2.\\[-30pt]
\end{align*}
\end{samepage}

\begin{remark}
{\rm
Subscripts on variables such as $a_n, b_n$ are given for
expository purposes. In an efficient implementation only a constant number of
real variables are needed, because $a_{n+1}$ can overwrite $a_n$
(after saving $a_n$ in a temporary variable for use in the computation
of $c_{n+1}$), and similarly for $b_n$, $c_n$, $s_n$, and $\pi_n'$.
Similar comments apply to Algorithm BB4.
} 
\end{remark}

\begin{samepage}
\noindent\textbf{Algorithm BB4}\\
\textbf{Input}: The number of iterations $n_{max}$.\\
\textbf{Output}: A sequence $(\pi_n'')$ of approximations to $\pi$.
\begin{align*}
&y_0 := \sqrt{2}-1;\;\; z_0 := 2y_0^2;\\
&\textbf{for } n \text{ from } 0 \text{ to } n_{max}-1 \text{ do}\\
&\hspace*{2em}\textbf{output } \pi_n'' := 1/z_n\,;\\
&\hspace*{2em}\textbf{if } n < n_{max}-1 \textbf{ then }\\
&\hspace{4em}y_{n+1} := \frac{1-(1-y_n^4)^{1/4}}
                  {1+(1-y_n^4)^{1/4}}\;;\\
&\hspace{4em}z_{n+1} := 
        z_n(1+y_{n+1})^4 - 2^{2n+3}y_{n+1}(1+y_{n+1}+y_{n+1}^2).\\[-20pt]
\end{align*}
\end{samepage}

Algorithm GL1 produces a sequence $(\pi_n')$ of approximations to $\pi$.
It is known that Algorithm GL1 has \emph{quadratic} convergence,
so (roughly speaking) the number of correct digits \emph{doubles}
at each iteration.
More precisely, if $e_n' := \pi - \pi_n'$ is the error in $\pi_n'$, then 
\begin{equation}				\label{eq:error-GL1}
0 < e_{n}' < \pi^2 2^{n+4}\exp(-2^{n+1}\pi).
\end{equation}

Similarly, Algorithm BB4 produces a sequence $(\pi_n'')$ of approximations
to~$\pi$, and the sequence has \emph{quartic} convergence, 
so (roughly speaking) the number of correct digits \emph{quadruples}
at each iteration.
More precisely, if $e_n'' := \pi - \pi_n''$ is the error in $\pi_n''$, then 
\begin{equation}				\label{eq:error-BB4}
0 < e_{n}'' < \pi^2 2^{2n+4}\exp(-2^{2n+1}\pi).
\end{equation}

In 2017--2018, we observed that the error bound on $e_{2n}'$, obtained
from~\eqref{eq:error-GL1} by the substitution $n \to 2n$,
is \emph{exactly} the same as the
error bound on $e_n''$ given in~\eqref{eq:error-BB4}.
We computed some of the errors to high precision, and discovered that, 
not only were the error bounds equal, but the errors were identical.
More precisely, $e_n'' = e_{2n}'$ for all $n \ge 0$.

This implies that one iteration of Algorithm BB4 is equivalent to
two iterations of Algorithm GL1, in the sense that
$\pi_n'' = \pi_{2n}'$.

In the literature up to 2018, it had been (implicitly) \emph{assumed}
by the Borwein brothers, Bailey, Kanada, and others
~\cite{Bailey88,BBB89,PAGM,Kanada} 
that the algorithms were inequivalent.\footnote{The equivalence result assumes
that arithmetic is exact. If, as is necessary in practice, limited-precision
approximations are used, then $\pi_n''$ and $\pi_{2n}'$ may differ
slightly, due to the effect of rounding errors.}
For example, when computing $\pi$ to high precision,
Kanada~\cite{Kanada} used both algorithms as a consistency
check. Although this would catch some programming errors, it does
not provide an independent check that the constant computed is
actually~$\pi$. A better consistency check would be provided by a fast algorithm
based on different theory,
such as a Ramanujan-Sato series for $1/\pi$, 
see~\cite{BBB89}, \cite[\S6]{rpb269}.
The equivalence is implicit in the work of Guillera~\cite{Guillera08},
but was not stated explicitly until 2018, when a proof was given
in~\cite{rpb269}. A different proof may be found in~\cite{Milla}.

\begin{remark}
{\rm
An iteration of Algorithm BB4 is about twice as time-consuming as an
iteration of Algorithm GL1.
On the other hand, Algorithm GL1 requires twice as many iterations
to obtain the same precision. Thus, it is not clear which algorithm
is preferable in practice. For more on this topic, 
see~\cite[end of \S4]{rpb269}.
}
\end{remark}

\section{Some conclusions}				\label{sec:conclusion}

There are many classes of mathematical errors. We list some of the
more common ones. An awareness of such errors may help the
reader to avoid similar ones.

\begin{enumerate}

\item Numerical or algebraic errors,
possibly caused by an incorrect or numerically
unstable algorithm, or an error in its implementation as a computer
program.

\item Errors of omission.
For example, Poincar\'e missed the possibility of chaotic 
behaviour (\S\ref{subsec:Poincare}),
and Vassilev-Missana failed to consider integers with 
more than two distinct prime factors (\S\ref{subsec:disproof}).

\item Unwarranted assumption, such as in our analysis of the binary
Euc\-lidean algorithm (\S\ref{subsec:rpb037}).
Many incorrect proofs of RH are in this class.
For example, they may assume that some property of $\zeta(s)$ that holds
for $\Re(s) > 1$
also holds for $1/2 \le \Re(s) \le 1$.

\item Gap in the proof. A recent example is Wiles's first proof of FLT,
see \S\ref{subsec:Wiles}. More generally, perhaps
the author proves A and claims that A implies B, but the implication is not
obvious~-- it may be true but needs to be proved. Also, a proof may be
logically correct, so far as it goes, but not prove what is claimed.
For example, if we are trying to prove a statement $(\forall n\ge 0) P(n)$,
it is not (usually) sufficient to prove $P(0), P(1), \ldots, P(99)$.

\item Using an incorrect result from a published paper.
For example, Ag\'elas (\S\ref{sec:VMA}) used an incorrect
result of Vassilev-Missana.
In the worst case this could lead to a whole tree of
incorrect results. 

\item Using a correct result but applying it incorrectly.
For example, the definitions may be subtly different, 
or the domain of applicability of the correct
result may not be taken into account correctly. See for example
the discussion of
Erd\H{o}s's papers~\cite{Erdos1960,Erdos1980} in \S\ref{subsec:rpb055}.

\item General lack of clarity or rigor, 
so although the proof may be correct in some
sense, it is not currently accepted by the mathematical community
(see \S\ref{subsec:Mochizuki}).
Old examples include many of Euler's proofs,
and various proofs of the fundamental theorem of
algebra that were not rigorous by modern standards.
More recently, some well-known examples are from the Italian school of 
algebraic geometry~\cite{Wiki-ItalianSchool} in the period 1885--1935. 

\end{enumerate}

\subsection*{Acknowledgements}

\S\ref{sec:VMA} is a summary of material in~\cite{rpb277,rpb278}.
Kannan Soundararajan pointed out some relevant discussion
on MathOverflow~\cite{Mathoverflow-Klangen}.
We thank L\'eo Ag\'elas for his correspondence
regarding \S\ref{sec:VMA},
and L\'eo Ag\'elas, Rob Corless, and Artur Kawalec for confirming some
of the computations in~\S\ref{sec:VMA}.

We thank Philippe Flajolet\footnote{Philippe Flajolet 1948--2011.},
Don Knuth, and Brigitte Vall\'ee,
for their correspondence regarding~\S\ref{subsec:rpb037},
and 
Paul Erd\H{o}s\footnote{Paul Erd\H{o}s 1913--1996.}, 
Donald J.~Newman\footnote{Donald J.~Newman 1930--2007.}, 
Andrew Odlyzko, and
Carl Pomerance, for correspondence 
regarding the conjecture of~\S\ref{subsec:rpb055}.
We also thank Jonathan Borwein\footnote{Jonathan Borwein 1951--2016.}
and David Bailey for discussions regarding AGM-based algorithms,
relevant to \S\ref{subsec:rpb269}.

Finally, thanks to 
Nathan Clisby,
Rob Corless,
Garry Herrington,
Fredrik Johansson,
Christian Krattenthaler,
Cleve Moler,
Nick Trefethen,
and Tim Trudgian
for their comments on the
first draft of this paper, and for noticing some typos.

\pagebreak[3]


\begin{thebibliography}{99}

{\small

\bibitem{Agelas}
L\'eo Ag\'elas,
\emph{Generalized Riemann Hypothesis}, 29 May 2019, hal-00747680v3.
\url{https://hal.archives-ouvertes.fr/hal-00747680v3}.

\bibitem{Time}
Anon,
Science: As you were,
\emph{Time} \textbf{XLV}, 18, April 30, 1945.
\url{http://content.time.com/time/subscriber/article/0,33009,797441,00.html}.

\bibitem{Appel-Haken}
Kenneth Appel and Wolfgang Haken,
Every planar map is four colorable.\ I.\ Discharging,
\emph{Illinois J. Math.\ }\textbf{21}, 3 (1977), 429--490.
Also part II, \emph{ibid}, 491--567.

\bibitem{Appel-Haken-book}
Kenneth Appel and Wolfgang Haken,
\emph{Every Planar Map is Four-Colorable},
Contemporary Mathematics \textbf{98}, 
Amer.\ Math.\ Soc., Providence, Rhode Island, 1989. 

\bibitem{Bailey88}
David H.~Bailey,
The computation of $\pi$ to $29,360,000$ decimal digits using Borweins'
quartically convergent algorithm,
\emph{Math.\ Comp.} \textbf{50} (1988), 283--296.

\bibitem{Ball}
Peter Ball,
Proof claimed for deep connection between primes,
\emph{Nature}, 10 Sept.\ 2012.
\url{https://doi.org/10.1038/nature.2012.11378}.

\bibitem{Barrow-Green}
June Barrow-Green,
\emph{Poincar\'e and the Three Body Problem},
Amer.\ Math.\ Soc., Providence, Rhode Island, 1997.

\bibitem{Berndt-Rademacher}
Bruce C.\ Berndt,
Hans Rademacher (1892--1969),
\emph{Acta Arithmetica} \textbf{LXI.3} (1992), 209--231.

\bibitem{Borwein}
Jonathan M.\ Borwein and David H.\ Bailey,
\emph{Mathematics by Experiment: Plausible Reasoning in the $21$st Century},
2nd edition,
A.~K.~Peters, Wellesley, Massachusetts, 2008. 

\bibitem{PAGM}
Jonathan M. Borwein and Peter B. Borwein,
{\em  Pi and the AGM: A Study in Analytic Number Theory and Computational
Complexity},
Monographies et \'Etudes de la Soci\'et\'e Math\'ematique du Canada,
John Wiley \& Sons, Toronto, 1987.

\bibitem{BBB89}
Jonathan M.~Borwein, Peter B.~Borwein, and David H.~Bailey,
Ramanujan, modular equations, and approximations to pi
or how to compute one billion digits of pi,
\emph{Amer.\ Math.\ Monthly} \textbf{96} 
(1989), 201--219.

\bibitem{BCRW}
Peter Borwein, Stephen Choi, Brendan Rooney, and Andrea Weirathmueller (eds.),
\emph{The Riemann Hypothesis},
CMS 
Books in Mathematics, Vol.~27, Springer, New York, 2008.

\bibitem{rpb034}
Richard P.\ Brent,
Fast multiple-precision evaluation of elementary functions,
\emph{J.~ACM} \textbf{23} (1976), 242--251.

\bibitem{rpb037}
Richard P.\ Brent,
Analysis of the binary Euclidean algorithm,
\emph{New Directions and Recent Results in Algorithms and Complexity}
(edited by J.~F.~Traub),
Academic Press, New York, 1976, 321--355.
Errata: \url{https://maths-people.anu.edu.au/~brent/pub/pub037.html}.

\bibitem{rpb183}
Richard P.\ Brent,
Twenty years' analysis of the binary Euclidean algorithm,
\emph{Millennial Perspectives in Computer Science: Proceedings of the
$1999$ Oxford-Microsoft Symposium in honour of Sir Tony Hoare}
(edited by J.~Davies \emph{et al}), Palgrave, New York, 2000, 41--53.
Also (extended version) \url{https://arxiv.org/abs/1303.2772}, 2013.

\bibitem{rpb252}
Richard P.\ Brent,
Old and new algorithms for $\pi$,
\emph{Notices of the AMS} (letter to the editor) \textbf{60}, 
1 (Jan.\ 2013), 7.
Also \url{https://arxiv.org/abs/1303.2762}.

\bibitem{rpb269}
Richard P.\ Brent,
The Borwein brothers, pi and the AGM,
\emph{From Analysis to Visualisation: A Celebration of the Life and Legacy
of Jonathan M.\ Borwein} (Brailey Sims et al, editors),
Springer Proceedings in Mathematics and Statistics \textbf{313} (2020),
323--348.
Also \url{https://arxiv.org/abs/1802.07558}, 2018.
Proceedings errata: 
\url{https://maths-people.anu.edu.au/~brent/pub/JBCC-errata.html}.

\bibitem{rpb277}
Richard P.\ Brent,
On some results of Ag\'elas concerning the GRH and of Vassilev-Missana
concerning the prime zeta function,
\url{https://arxiv.org/abs/2103.09418}, 2021.

\bibitem{rpb278}
Richard P.\ Brent,
On two theorems of Vassilev-Missana,
\emph{Notes on Number Theory and Discrete Mathematics}
\textbf{27}, 2 (2021), 49--50.

\bibitem{rpb055}
Richard P.\ Brent and H.\ T.\ Kung,
The area-time complexity of binary multiplication,
\emph{J.~ACM} \textbf{28} (1981), 521--534.
Corrigendum: \emph{ibid} \textbf{29} (1982), 904.
Also \url{https://maths-people.anu.edu.au/~brent/pub/pub055.html}.

\bibitem{rpb272}
Richard P. Brent, Carl Pomerance, David Purdum, and Jonathan Webster,
Algorithms for the multiplication table,
\emph{Integers}, to appear. Also
\url{https://arxiv.org/abs/1908.04251}, 2021.

\bibitem{modularity}
Christophe Breuil, Brian Conrad, Fred Diamond, and Richard Taylor,
On the modularity of elliptic curves over \textbf{Q}: wild $3$-adic
exercises, 
\emph{J.~Amer.\ Math.\ Soc.\ }\textbf{14}, 4 (2001), 843--939.

\bibitem{Castelvecchi}
Davide Castelvecchi,
Mathematical proof that rocked number theory will be published,
\emph{Nature} \textbf{580}, 3 April 2020, 177.
\url{https://www.nature.com/articles/d41586-020-00998-2}.

\bibitem{Churchill}
Winston Churchill, Speech to the House of Commons, 1948
(paraphrasing Santayana~\cite[Vol.~1, Ch.~12]{Santayana}).

\bibitem{Davenport}
Harold Davenport, 
\emph{Multiplicative Number Theory},
Graduate Texts in Mathematics, Vol.~74
(third edition, revised and with a preface by
Hugh L.~Montgomery), Springer-Verlag, NY, 2000.

\bibitem{Diacu}
Florin Diacu,
The solution of the $n$-body problem,
\emph{The Mathematical Intelligencer} \textbf{18}, 3 (1996), 66--70.

\bibitem{Erdos1960}
Paul Erd\H{o}s,
An asymptotic inequality in the theory of numbers,
\emph{Vestnik Leningrad Univ.\ }\textbf{15},
Ser.\ Mat.\ Mekh.\ Astron.\ No.\ 3, 13 (1960), 41--49
(in Russian).
MR0126424 (23\#A3720), Zbl 0104.26804.

\bibitem{Erdos1980}
Paul Erd\H{o}s and Andr\'as S\'ark\"ozy,
On the number of prime factors in integers,
\emph{Acta Sci.\ Math.\ (Szeged)} \textbf{42}, 3--4 (1980), 237--246.
MR603312 (82c:10053).

\bibitem{FS}
Philippe Flajolet and Robert Sedgewick,
\emph{Analytic Combinatorics},
Cambridge Univ.\ Press, 2009.

\bibitem{Ford}
Kevin Ford,
The distribution of integers with a divisor in a given interval,
\emph{Annals of Math.\ }(2) \textbf{168} (2), 2008, 367--433.

\bibitem{Froberg}
Carl-Eric Fr\"oberg,
On the prime zeta function,
\emph{BIT Numerical Mathematics} (formerly \emph{BIT})
\textbf{8} (1968), 187--202.

\bibitem{Gray-Poincare}
Jeremy Gray,
Poincar\'e in the archives~-- two examples,
\emph{Philosophia Scienti\ae} \textbf{2}, 3 (1997), 27--39.
\url{http://www.numdam.org/article/PHSC_1997__2_3_27_0.pdf}.

\bibitem{Gray}
Jeremy Gray,
\emph{Henri Poincar\'e: a Scientific Biography},
Princeton University Press, Princeton, 2013.

\bibitem{Guillera08}
Jes\'us Guillera,
Easy proofs of some Borwein algorithms for $\pi$,
\emph{Amer.\ Math.\ Monthly} \textbf{115} (2008), 850--854.

\bibitem{Hildebrand}
Adolf J.\ Hildebrand, 
\emph{Introduction to Analytic Number Theory},
Math 531 Lecture Notes, Fall 2005.
\url{https://faculty.math.illinois.edu/~hildebr/ant/main.pdf}.

\bibitem{Ifrah}
Georges Ifrah,
\emph{The Universal History of Numbers: From
Prehistory to the Invention of the Computer}, 
John Wiley \& Sons, New York, 2000.

\bibitem{Kanada}
Yasumasa Kanada,		
Vectorization of multiple-precision arithmetic program
and $201,326,000$ decimal digits of pi calculation,
\emph{Supercomputing 88}, IEEE, 1988, 117--128.
\url{https://doi.org/10.1109/SUPERC.1988.74139}.

\bibitem{Kempe}
Alfred B.\ Kempe,
On the geographical problem of four colours,
\emph{Amer.\ J.\ of Math.\ }\textbf{2} (1879), 193--220.

\bibitem{Klarreich}
Erica Klarreich,
Titans of mathematics clash over epic proof of ABC conjecture,
\emph{Quanta Magazine}, Sept.~20, 2018.

\bibitem{Knuth}
Donald E.\ Knuth,
\emph{The Art of Computer Programming},
Vol.~2, 3rd edition, Addison-Wesley, Menlo Park, Ca., 1998.

\bibitem{Masser}
David W.~Masser,	
Open problems,
\emph{Proceedings of a Symposium on Analytic Number Theory} (W.~Chen, ed.),
Imperial College, London, 1985.

\bibitem{Mathoverflow-Klangen}
Mathoverflow,
Question by user Klangen and comments/answers at
\url{https://mathoverflow.net/questions/288847/}, 2017--2021.

\bibitem{Mathoverflow-Rademacher}
Mathoverflow,
Question by user TKZim and answers by Carlo Beenakker and Todd Trimble at
\url{https://mathoverflow.net/questions/293545/}, 2018.

\bibitem{Mawhin}
Jean Mawhin,
Henri Poincar\'e: a life in the service of science,
\emph{Notices of the AMS} \textbf{52}, 9 (2005), 1036--1044.

\bibitem{Maze}
G\'erard Maze,
Existence of a limiting distribution for the binary GCD algorithm,
\emph{J.\ Discrete Algorithms} \textbf{5} (2007), 176--186.

\bibitem{Mertens}
Franz Mertens,
\"Uber eine zahlentheoretische Function'', 
\emph{Sitzungsberichte Akad.\ Wien} \textbf{106}, Abt.\ 2a (1897), 761--830.

\bibitem{Milla}
Lorenz Milla,
Easy proof of three recursive $\pi$-algorithms,
\url{https://arxiv.org/abs/1907.04110}, 2019.

\bibitem{Morris}
Ian D.\ Morris,
A rigorous version of R.~P.~Brent's model for the binary Euclidean algorithm,
\emph{Advances in Mathematics} \textbf{290} (2016), 73--143.
\url{https://doi.org/10.1016/j.aim.2015.12.008}.

\bibitem{Phragmen-StAndrews}
John J. O'Connor and Edmund F. Robertson,
{Lars Edvard Phragm\'en},
\emph{MacTutor History of Mathematics Archive},
\url{https://mathshistory.st-andrews.ac.uk/Biographies/Phragmen/},
2011.

\bibitem{Poincare-StAndrews}
John~J.~O'Connor and Edmund~F.~Robertson,
Jules Henri Poincar\'e,
\emph{MacTutor History of Mathematics Archive},
\url{https://mathshistory.st-andrews.ac.uk/Biographies/Poincare/},
2003.

\bibitem{Odlyzko-teRiele}
Andrew M.~Odlyzko and Herman J.~J.~te Riele,
Disproof of the Mertens conjecture,
\emph{J.\ f\"ur die reine und angewandte Mathematik}
\textbf{357} (1985), 138--160.

\bibitem{Oesterle}
Joseph Oesterl\'e,
Nouvelles approches du ``th\'eor\`eme'' de Fermat,
S\'eminaire Bourbaki, Vol.~1987/88.
\emph{Ast\'erisque} 161--162 (1988),
Exp.\ No.\  694, 4 (1989), 165--186.
MR0992208 (90g:11038)

\bibitem{Planck}
Max K.\ Planck, 
\emph{Scientific Autobiography and Other Papers},
Philosophical Library, New York, 1950.

\bibitem{Poincare-1889b}
Henri Poincar\'e,
\emph{Sur le probl\`eme des trois corps et les \'equations de la dynamique 
avec des notes par l'auteur~-- m\'emoire couronn\'e du prix de
S.~M.~le Roi Oscar~II}, 1889. Printed for \emph{Acta Math.\ }but withdrawn.

\bibitem{Poincare-1890c}
Henri Poincar\'e,
Sur le probl\`eme des trois corps et les \'equations de la dynamique,
\emph{Acta Math.\ }\textbf{13}, 1890, 1--270.

\bibitem{Sabbagh}
Karl Sabbagh,
\emph{The Riemann Hypothesis: 
the Greatest Unsolved Problem in Mathematics},
Farrar, Straus and Giroux, 2004.

\bibitem{Salamin76}
{Eugene Salamin,}
Computation of $\pi$ using arithmetic-geometric mean,
\emph{Math.\ Comp.} \textbf{30} (1976), 565--570.

\bibitem{Santayana}
George Santayana,
``Those who cannot remember the past are condemned to repeat it”,
\emph{The Life of Reason}, 
1905.
\url{http://www.gutenberg.org/files/15000/}.

\bibitem{Scholze-Stix-2018}
Peter Scholze and Jakob Stix,
Why \emph{abc} is still a conjecture, manuscript, 2018.
Available from
\url{https://ncatlab.org/nlab/files/why_abc_is_still_a_conjecture.pdf}.

\bibitem{Singh}
Simon Singh,
\emph{Fermat's Last Theorem},
William Collins, London, 1997.

\bibitem{Stieltjes}
Thomas J. Stieltjes,
Lettre \`a Hermite de 11 Juillet 1885, Lettre \#79,
in B.~Baillaud and H.~Bourget (eds.),
\emph{Correspondance d'Hermite et Stieltjes},
Gauthier-Villars, Paris, 1905, 160--164.

\bibitem{Tait}
Peter Guthrie Tait, 
Remarks on the previous communication
(Note on the colourings of maps, by Frederick Guthrie),
\emph{Proc.\ R.\ Soc.\ Edinburgh} \textbf{10} (1880), 729. 

\bibitem{Taylor-Wiles}
Richard Taylor and Andrew Wiles,
Ring theoretic properties of certain Hecke algebras,
\emph{Annals of Math.\ }(2) \textbf{141} (3), 1995, 553--572.

\bibitem{Titchmarsh}
Edward C.\ Titchmarsh,	
\emph{The Theory of the Riemann Zeta-function},
2nd edition\ (edited and with a preface by D.~R.~Heath-Brown),
Oxford, 1986.

\bibitem{Vallee}
Brigitte Vall\'ee, 
Dynamics of the binary Euclidean algorithm: functional analysis and operators,
\emph{Algorithmica} \textbf{22} (1998), 660--685.

\bibitem{Vassilev-Missana}
Mladen Vassilev-Missana,
A note on prime zeta function and Riemann zeta function,
\emph{Notes on Number Theory and Discrete Mathematics}
\textbf{22}, 4 (2016), 12--15.
Corrigendum \emph{ibid} \textbf{27}, 2 (2021), 51--53.

\bibitem{Watkins}
Matthew R. Watkins,
\emph{Proposed (dis)proofs of the Riemann Hypothesis},
\url{http://empslocal.ex.ac.uk/people/staff/mrwatkin/zeta/RHproofs.htm}.

\bibitem{Wiki-abc}
Wikipedia,
\emph{$abc$ conjecture},\\
\url{https://en.wikipedia.org/wiki/Abc_conjecture}.

\bibitem{Wiki-FLT-history}
Wikipedia,
\emph{Fermat's last theorem},
\url{https://en.wikipedia.org/wiki/%
Fermat%27s_Last_Theorem#Prizes_and_incorrect_proofs}.

\bibitem{Wiki-four-colour}
Wikipedia,
\emph{Four color theorem},
\url{https://en.wikipedia.org/wiki/Four_color_theorem#Early_proof_attempts}.

\bibitem{Wiki-ItalianSchool}
Wikipedia,
\emph{Italian school of algebraic geometry},
\url{https://en.wikipedia.org/wiki/Italian_school_of_algebraic_geometry}.

\bibitem{Wiki-incomplete-proofs}
Wikipedia,
\emph{List of incomplete proofs}.
\url{https://en.wikipedia.org/wiki/List_of_incomplete_proofs}.

\bibitem{Wiki-Mertens}
Wikipedia,
\emph{Mertens conjecture}.
\url{https://en.wikipedia.org/wiki/Mertens_conjecture}.

\bibitem{Wiki-Szpiro}
Wikipedia,
\emph{Szpiro's conjecture},
\url{https://en.wikipedia.org/wiki/Szpiro%27s_conjecture}.

\bibitem{Wiki-FLT-proof}
Wikipedia,
\emph{Wiles's proof of Fermat's Last Theorem}
\url{https://en.wikipedia.org/wiki/Wiles%27s_proof_of_Fermat%27s_Last_Theorem}.

\bibitem{Wiles}
Andrew Wiles, 
Modular elliptic curves and Fermat's Last Theorem,
\emph{Annals of Math.\ }(2) \textbf{141} (3), 1995, 443--551.

\bibitem{Woit1}
Peter Woit,
ABC is still a conjecture,
March 4, 2021.
\url{https://www.math.columbia.edu/~woit/wordpress/?p=12220}.

} 

\end{thebibliography}
\end{document}